\newtheorem{theorem}{Theorem}
\newtheorem{algorithm}[theorem]{Algorithm}
\newtheorem{lemma}[theorem]{Lemma}
\newtheorem{proposition}[theorem]{Proposition}
\newenvironment{remark}{\rem\rm}{\endrem}
\newcounter{unnumber}
\newenvironment{proof}{\prf\rm}{\hfill{$\blacksquare$}\endprf}
\newcommand{\N}{\mathbb{N}}%
\newcommand{\ol}{\overline}%
\newcommand{\ul}{\underline}%
\DeclareMathOperator*\dom{dom}%
\DeclareMathOperator*\gr{Gr}%
\DeclareMathOperator*\ran{ran}%
\DeclareMathOperator*\id{Id}%
\DeclareMathOperator*\zer{zer}
\title{A hybrid proximal-extragradient algorithm with inertial effects}
\author{Radu Ioan Bo\c{t} \thanks{University of Vienna, Faculty of Mathematics, Oskar-Morgenstern-Platz 1, A-1090 Vienna, Austria,
email: radu.bot@univie.ac.at. Research partially supported by DFG (German Research Foundation), project BO 2516/4-1.} \and
Ern\"{o} Robert Csetnek \thanks {University of Vienna, Faculty of Mathematics, Oskar-Morgenstern-Platz 1, A-1090 Vienna, Austria,
email: ernoe.robert.csetnek@univie.ac.at. Research supported by DFG (German Research Foundation), project BO 2516/4-1.}}
\begin{document}
\maketitle

\noindent \textbf{Abstract.} We incorporate inertial terms in the hybrid proximal-extragradient algorithm and investigate the convergence 
properties of the resulting iterative scheme designed for finding the zeros of a maximally monotone operator in real Hilbert spaces. The convergence analysis
relies on extended Fej\'{e}r monotonicity techniques combined with the celebrated Opial Lemma. We also 
show that the classical hybrid proximal-extragradient algorithm and the inertial versions of the proximal point, the forward-backward and 
the forward-backward-forward algorithms can be embedded in the framework of the proposed iterative scheme. 
\vspace{0.001cm}

\noindent \textbf{Key Words.} maximally monotone operator, enlargement of a maximally monotone operator, resolvent, 
hybrid proximal point algorithm, inertial splitting algorithm \vspace{1ex}

\noindent \textbf{AMS subject classification.} 47H05, 65K05, 90C25

\section{Introduction}\label{sec-intr}

The problem of numerically approaching the set of zeros of a maximally monotone operator in real Hilbert spaces 
is a topic of relevance for research communities working in different mathematical areas, like
partial differential equations, evolution systems and convex optimization, with wide applications to real-life problems as in image processing, signal recovery, 
classification via machine learning, location theory, average consensus in network coloring, clustering, etc.

The classical iterative scheme for solving this problem is the proximal point algorithm (see \cite{rock-prox}): $$x^{k+1}=(\id +c_kT)^{-1}(x^k) \quad \forall k \geq 0,$$ 
where $T:{\cal H}\rightrightarrows {\cal H}$ is a maximally monotone operator, $\cal H$ is a real Hilbert space, 
$\id$ is the identity operator on $\cal H$ and $(c_k)_{k\in\N}$ is a real sequence fulfilling $\liminf_{k\rightarrow+\infty} c_k>0$. 
In case $\zer T=\{x\in{\cal H}:0\in Tx\}\neq\emptyset$, the above algorithm weakly converges to a point in $\zer T$, no matter how the staring point $x^0\in{\cal H}$ is chosen. 

Following the ideas from \cite{bu-iu-sv1997}, developed in the context of dealing with variational inequalities, and \cite{sol-sv1999-jca}, 
the following {\it hybrid proximal-extragradient algorithm} has been proposed in \cite{sol-sv1999}:
\begin{algorithm}\label{alg-hyb-classic} Choose $x^0\in{\cal H}$, $\sigma\in[0,1)$ and $(c_k)_{k\in\N}$ such that $c_k\geq \ul c>0$ for all $k\in\N$. 
For all $k\in\N$ consider the following iterative scheme: 
\begin{itemize} \item[(i)] for some 
$\varepsilon_k\geq 0$, choose $v^k\in T^{[\varepsilon_k]}(y^k)$ such that $$\|c_kv^k+y^k-x^k\|^2+2c_k\varepsilon_k\leq \sigma^2\|y^k-x^k\|^2;$$
\item[(ii)] define $x^{k+1}=x^k-c_kv^k$.\end{itemize}
\end{algorithm}

In the above iterative scheme,  $T^{[\varepsilon]}:{\cal H}\rightrightarrows {\cal H}$ denotes the $\varepsilon$-enlargement of the operator $T$. 
It is shown in \cite{sol-sv1999} that the sequence $(x^k)_{k\in\N}$ weakly converges to a point in $\zer T$, provided this set is nonempty. We refer the reader
to \cite{mo-sv2010}  for iteration complexity results and also to \cite{sv} for a more general treatment of the hybrid-type proximal-extragradient 
methods. Several classical algorithms from the literature, like the classical proximal point, the forward-backward and the forward-backward-forward 
algorithms can derived as particular instances from the hybrid proximal-extragradient iterative scheme. Let us notice that the 
{\it forward-backward} and the {\it forward-backward-forward} (see \cite{tseng}) algorithms are designed for finding the zeros of the sum of 
two maximally monotone operators, one of them being single-valued, their formulations depending whether the single-valued operator is cocoercive 
or (only) monotone and Lipschitz continuous. The book \cite{bauschke-book} is an excellent reference for anyone interested in 
proximal algorithms. 

In this paper we will focus on the class of so-called \textit{inertial proximal methods}, the origins of which go back to \cite{alvarez2000, alvarez-attouch2001}.
The idea behind the iterative scheme relies on the use of an implicit discretization of a differential system of second-order in time and it was employed for the first time in  the context of finding the zeros of a
maximally monotone operator in \cite{alvarez-attouch2001}. One  of the main features of the inertial proximal algorithm is that the next iterate is defined by making use of the previous two iterates. It also turns out that
the method is a generalization of the classical proximal point one (see \cite{rock-prox}). Since its introduction, one can notice an increasing interest in the class of inertial type algorithms, see \cite{alvarez2000, alvarez-attouch2001,
cabot-frankel2011, mainge2008, mainge-moudafi2008, moudafi-oliny2003, b-c-h-inertial, b-c-inertial, b-c-inertial-admm, att-peyp-red, pesq-pust}. Especially noticeable is that these ideas where also used in \cite{moudafi-oliny2003} 
in the context of determining the zeros of the sum of a maximally monotone operator and a (single-valued) cocoercive operator, giving rise to the so-called inertial forward-backward algorithm. 
We also notice that an inertial forward-backward-forward algorithm  has been proposed in \cite{b-c-inertial} for the same problem in case the single-valued operator is monotone and Lipschitz continuous.

In this note we propose a hybrid proximal-extragradient algorithm with inertial and memory effects. The convergence 
of the iterative scheme relies on extended Fej\'{e}r monotonicity techniques adapted to the needs of the inertial-type numerical scheme. 
Moreover, we also show, like in \cite{sv}, that the classical hybrid proximal-extragradient algorithm, the inertial proximal point algorithm and
the inertial versions of the forward-backward and forward-backward-forward algorithms can be derived from the inertial hybrid proximal-extragradient scheme proposed in the paper.

\section{Preliminaries}\label{sec-pr}

In this section we recall some notations and results in order to make the paper self contained. 
For the notions and results presented as follows we refer the reader to \cite{bo-van, b-hab, bauschke-book, simons, bur-iu-book, teza-csetnek}. 
Let $\N= \{0,1,2,...\}$ be the set of nonnegative integers.
Let ${\cal H}$ be a real Hilbert space with \textit{inner product} $\langle\cdot,\cdot\rangle$ and associated \textit{norm} $\|\cdot\|=\sqrt{\langle \cdot,\cdot\rangle}$.
The symbols $\rightharpoonup$ and $\rightarrow$ denote weak and strong convergence, respectively.

For an arbitrary set-valued operator $T:{\cal H}\rightrightarrows {\cal H}$ we denote by $\gr T=\{(x,u)\in {\cal H}\times {\cal H}:u\in Tx\}$ its \emph{graph}, 
by $\dom T=\{x \in {\cal H} : Tx \neq \emptyset\}$
its \emph{domain}, by $\ran T=\cup_{x\in{\cal{H}}} Tx$ its {\it range}.
We use also the notation $\zer T=\{x\in{\cal{H}}:0\in Tx\}$ for the \emph{set of zeros} of $T$. We say that $T$ is \emph{monotone}
if $\langle x-y,u-v\rangle\geq 0$ for all $(x,u),(y,v)\in\gr T$. A monotone operator $T$ is said to be \emph{maximally monotone}, if there exists no proper 
monotone extension of the graph of $T$ on ${\cal H}\times {\cal H}$.
The \emph{resolvent} of $T$, $J_T:{\cal H} \rightrightarrows {\cal H}$, is defined by $p\in J_T x \ \mbox{if and only if} \ (p,x-p)\in\gr T$. 
Moreover, if $T$ is maximally monotone, then $J_T:{\cal H} \rightarrow {\cal H}$ is single-valued and maximally monotone
(see \cite[Proposition 23.7 and Corollary 23.10]{bauschke-book}). 

Let $\gamma>0$. A single-valued operator $A:{\cal H}\rightarrow {\cal H}$ is said to be \textit{$\gamma$-cocoercive} if $\langle x-y,Ax-Ay\rangle\geq \gamma\|Ax-Ay\|^2$ for all $(x,y)\in {\cal H}\times {\cal H}$.
Moreover, $A$ is \textit{$\gamma$-Lipschitzian} if $\|Ax-Ay\|\leq \gamma\|x-y\|$ for all $(x,y)\in {\cal H}\times {\cal H}$. 

Let $T:{\cal H}\rightrightarrows {\cal H}$ be a monotone operator and $\varepsilon\geq 0$. The $\varepsilon$-enlargement of $T$, denoted by 
$T^{[\varepsilon]}:{\cal H}\rightrightarrows {\cal H}$, is defined by
$$T^{[\varepsilon]}(x)=\{u\in{\cal H}:\langle x-y,u-v\rangle\geq -\varepsilon \ \forall (y,v)\in\gr T \}.$$ 

Introduced in \cite{bu-iu-sv1997}, this notion proved to possess fruitful properties in connection with the theory of monotone operators \cite{bur-iu-book, BCW-set-val, bur-sag-sv}, being 
used also in the formulation of several numerical schemes of proximal-type. The following properties, which will be used throughout the paper, have been taken from \cite{sv}.

\begin{proposition}\label{prop-enl} Let $T,T_1,T_2:{\cal H}\rightrightarrows {\cal H}$ be maximally monotone operators and 
$A:{\cal H}\rightarrow {\cal H}$ be $\gamma$-cocoercive, where $\gamma >0$. The following hold: 

(i) $T=T^{[0]}$; 

(ii) if $0\leq\varepsilon_1\leq\varepsilon_2$, then $T^{[\varepsilon_1]}(x)\subseteq T^{[\varepsilon_2]}(x)$ for all $x\in{\cal H}$; 

(iii) if $u_k\in T^{[\varepsilon_k]}(x_k)$ for all $k\in\N$, $x_k\rightharpoonup x$, $u_k\rightarrow u$ and $\varepsilon_k\rightarrow\varepsilon$, 
then $u\in T^{[\varepsilon]}(x)$; 

(iv) $T_1^{[\varepsilon_1]}(x)+T_2^{[\varepsilon_2]}(x)\subseteq (T_1+T_2)^{[\varepsilon_1+\varepsilon_2]}(x)$ for all $x\in{\cal H}$ and 
$\varepsilon_1,\varepsilon_2\geq 0$; 

(v) $Az\in A^{[\varepsilon]}(x)$, for all $x,z\in{\cal H}$, where $\varepsilon=\frac{\|x-z\|^2}{4\gamma}$. 
\end{proposition}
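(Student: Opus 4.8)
The plan is to verify the five items one after another, each directly from the definition $T^{[\varepsilon]}(x)=\{u\in{\cal H}:\langle x-y,u-v\rangle\geq -\varepsilon \ \forall (y,v)\in\gr T\}$, so that almost everything reduces to manipulating a single inner-product inequality. For (i), the inclusion $Tx\subseteq T^{[0]}(x)$ is immediate from monotonicity of $T$; conversely, $u\in T^{[0]}(x)$ says precisely that $(x,u)$ is monotonically related to every element of $\gr T$, so maximal monotonicity of $T$ forces $(x,u)\in\gr T$, i.e. $u\in Tx$. Item (ii) is trivial: from $0\leq\varepsilon_1\leq\varepsilon_2$ we get $-\varepsilon_1\geq-\varepsilon_2$, hence the defining inequality for $\varepsilon_1$ implies the one for $\varepsilon_2$.

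For (iii) I would fix $(y,v)\in\gr T$ and start from $\langle x_k-y,u_k-v\rangle\geq -\varepsilon_k$. Writing $\langle x_k-y,u_k-v\rangle=\langle x_k-y,u_k-u\rangle+\langle x_k-y,u-v\rangle$, the first summand tends to $0$ because $(x_k)_{k\in\N}$ is bounded (being weakly convergent) while $u_k-u\to 0$ strongly, and the second tends to $\langle x-y,u-v\rangle$ since $x_k\rightharpoonup x$. Passing to the limit and using $\varepsilon_k\to\varepsilon$ gives $\langle x-y,u-v\rangle\geq -\varepsilon$, which is exactly $u\in T^{[\varepsilon]}(x)$. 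For (iv), given $u_i\in T_i^{[\varepsilon_i]}(x)$ for $i=1,2$ and $(y,v)\in\gr(T_1+T_2)$, I would write $v=v_1+v_2$ with $v_i\in T_iy$ and simply add the two inequalities $\langle x-y,u_i-v_i\rangle\geq-\varepsilon_i$, obtaining $\langle x-y,(u_1+u_2)-v\rangle\geq-(\varepsilon_1+\varepsilon_2)$.

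The only item that requires a genuine estimate rather than bookkeeping is (v), and this is the step I expect to be the main (mild) obstacle. Here I would combine $\gamma$-cocoercivity of $A$ applied to the pair $(z,y)$, namely $\langle z-y,Az-Ay\rangle\geq\gamma\|Az-Ay\|^2$, with the decomposition $\langle x-y,Az-Ay\rangle=\langle z-y,Az-Ay\rangle+\langle x-z,Az-Ay\rangle$, and then lower-bound the cross term via Cauchy--Schwarz followed by Young's inequality in the form $\|x-z\|\,\|Az-Ay\|\leq\gamma\|Az-Ay\|^2+\tfrac{1}{4\gamma}\|x-z\|^2$. The $\gamma\|Az-Ay\|^2$ contributions cancel and what remains is $\langle x-y,Az-Ay\rangle\geq-\tfrac{1}{4\gamma}\|x-z\|^2=-\varepsilon$, valid for every $y\in{\cal H}$, hence $Az\in A^{[\varepsilon]}(x)$. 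Calibrating the weight in Young's inequality so the quadratic terms cancel exactly is the one point needing a little care; everything else is a direct unwinding of definitions, so in the write-up I would state the result with a reference to \cite{sv} and give only the short argument for (v).
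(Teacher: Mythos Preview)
Your argument for each of the five items is correct: (i) uses maximal monotonicity for the nontrivial inclusion, (ii) is immediate, (iii) is the standard weak--strong limit passage (boundedness of $(x_k)$ from weak convergence is the only thing to note), (iv) is the obvious additivity, and your estimate for (v) via cocoercivity plus Cauchy--Schwarz/Young with weight $\gamma$ is exactly the right computation.

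Regarding comparison with the paper: there is nothing to compare, because the paper does not prove this proposition at all; it simply records the statement and attributes it to \cite{sv}. Your closing remark that in the write-up you would cite \cite{sv} and at most spell out (v) is therefore entirely consistent with the paper's treatment---in fact you go slightly further, since the paper does not even sketch (v).
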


We close this section by presenting two convergence results which will be crucial for the proof of the main results
in the next section.

\begin{lemma}\label{ext-fejer1} (see \cite{alvarez-attouch2001, alvarez2000, alvarez2004}) Let $(\varphi_k)_{k\in\N}, (\delta_k)_{k\in\N}$ and $(\alpha_k)_{k\in \N}$ be sequences in
$[0,+\infty)$ such that $\varphi_{k+1}\leq\varphi_k+\alpha_k(\varphi_k-\varphi_{k-1})+\delta_k$
for all $k \geq 1$, $\sum_{k\in \N}\delta_k< + \infty$ and there exists a real number $\alpha$ with
$0\leq\alpha_k\leq\alpha<1$ for all $k\in\N$. Then the following hold: \begin{itemize}\item[(i)] $\sum_{k \geq 1}[\varphi_k-\varphi_{k-1}]_+< + \infty$, where
$[t]_+=\max\{t,0\}$; \item[(ii)] there exists $\varphi^*\in[0,+\infty)$ such that $\lim_{k\rightarrow+\infty}\varphi_k=\varphi^*$.\end{itemize}
\end{lemma}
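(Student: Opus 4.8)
The plan is to reduce everything to a scalar recursion for the positive parts $\theta_k:=[\varphi_k-\varphi_{k-1}]_+$, where $[t]_+=\max\{t,0\}$. First I would observe that the hypothesis, combined with $0\le\alpha_k\le\alpha<1$ and the trivial bound $\varphi_k-\varphi_{k-1}\le\theta_k$, gives for every $k\ge1$
\[
\varphi_{k+1}-\varphi_k\le\alpha_k(\varphi_k-\varphi_{k-1})+\delta_k\le\alpha\theta_k+\delta_k ,
\]
and since the right-hand side is nonnegative, this yields the one-step estimate $\theta_{k+1}\le\alpha\theta_k+\delta_k$ for all $k\ge1$.

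For (i), I would iterate this inequality to obtain $\theta_{k+1}\le\alpha^k\theta_1+\sum_{j=1}^k\alpha^{k-j}\delta_j$, then sum over $k\ge1$ and interchange the order of summation in the resulting (nonnegative) double series:
\[
\sum_{k\ge1}\sum_{j=1}^k\alpha^{k-j}\delta_j=\sum_{j\ge1}\delta_j\sum_{k\ge j}\alpha^{k-j}=\frac{1}{1-\alpha}\sum_{j\ge1}\delta_j .
\]
Together with $\theta_1\sum_{k\ge0}\alpha^k=\frac{\theta_1}{1-\alpha}<+\infty$ and $\sum_{j\ge1}\delta_j<+\infty$, this shows $\sum_{k\ge1}\theta_k<+\infty$, which is exactly assertion (i).

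For (ii), I would introduce the auxiliary sequence $\psi_k:=\varphi_k-\sum_{j=1}^{k}\theta_j$. From $\varphi_{k+1}-\varphi_k\le\theta_{k+1}$ we get $\psi_{k+1}\le\psi_k$, so $(\psi_k)_{k\ge1}$ is nonincreasing; moreover $\psi_k\ge-\sum_{j\ge1}\theta_j>-\infty$ by (i), so it is bounded below and hence converges to some $\psi^*\in\R$. Consequently $\varphi_k=\psi_k+\sum_{j=1}^k\theta_j$ converges to $\varphi^*:=\psi^*+\sum_{j\ge1}\theta_j$, and $\varphi^*\ge0$ because every $\varphi_k\ge0$.

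There is no serious obstacle in this argument; the only points requiring a little care are the Fubini-type rearrangement of the double series (legitimate because all terms are nonnegative) and the realization that subtracting the partial sums $\sum_{j\le k}\theta_j$ from $\varphi_k$ produces a monotone, bounded-below sequence — the standard device for deducing convergence of a quasi-Fej\'er sequence once the summability of its increments has been established.
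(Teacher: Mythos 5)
Your proof is correct, and it is essentially the standard argument: the paper does not reprove this lemma but cites Alvarez--Attouch, whose proof proceeds exactly as yours does, via the recursion $\theta_{k+1}\leq\alpha\theta_k+\delta_k$ for $\theta_k=[\varphi_k-\varphi_{k-1}]_+$, geometric summation to get (i), and then the monotone, bounded-below auxiliary sequence $\varphi_k-\sum_{j\leq k}\theta_j$ to get (ii). Nothing further is needed.
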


\begin{lemma}\label{opial} (Opial, see for example \cite{bauschke-book}) Let $C$ be a nonempty set of ${\cal H}$ and $(x^k)_{k\in\N}$ be a sequence in ${\cal H}$ such that
the following two conditions hold: \begin{itemize}\item[(a)] for every $x\in C$, $\lim_{k\rightarrow + \infty}\|x^k-x\|$ exists;
\item[(b)] every sequential weak cluster point of $(x^k)_{k\in\N}$ is in $C$;\end{itemize}
Then $(x^k)_{k\in\N}$ converges weakly to a point in $C$.
\end{lemma}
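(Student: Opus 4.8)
The plan is to carry out the classical two-phase argument underlying Opial's lemma: exploit assumption (a) to obtain boundedness (hence the existence of a weak cluster point), then combine the Hilbert-space expansion of squared norms with assumption (b) to show that the weak cluster point is unique, and finally upgrade uniqueness of the cluster point to genuine weak convergence.

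First I would note that, since $C\neq\emptyset$, fixing some $\bar x\in C$ and using (a) makes the real sequence $\big(\|x^k-\bar x\|\big)_{k\in\N}$ convergent, hence bounded; therefore $(x^k)_{k\in\N}$ is bounded in ${\cal H}$ and, bounded subsets of a Hilbert space being weakly sequentially compact, $(x^k)_{k\in\N}$ admits at least one weak sequential cluster point.

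Next I would establish uniqueness of such a cluster point. Let $u$ and $v$ be weak sequential cluster points, say $x^{k_j}\rightharpoonup u$ and $x^{l_j}\rightharpoonup v$; by (b) both $u$ and $v$ lie in $C$, so $\lim_{k\to+\infty}\|x^k-u\|$ and $\lim_{k\to+\infty}\|x^k-v\|$ both exist. Expanding the squared norms yields, for every $k$,
$$\|x^k-u\|^2-\|x^k-v\|^2 \;=\; 2\langle x^k,\, v-u\rangle + \|u\|^2-\|v\|^2,$$
whose left-hand side converges; hence $\langle x^k,v-u\rangle\to\ell$ for some $\ell\in\R$. Passing to the limit along $(x^{k_j})$ gives $\ell=\langle u,v-u\rangle$, and along $(x^{l_j})$ gives $\ell=\langle v,v-u\rangle$; subtracting, $\|u-v\|^2=0$, so $u=v$. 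Call this common value $x^*$; then $x^*\in C$ by (b).

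Finally I would argue that a bounded sequence in ${\cal H}$ with a unique weak sequential cluster point converges weakly to it: otherwise some weak neighbourhood $V$ of $x^*$ is missed by infinitely many terms, i.e. by a subsequence, which — being bounded — has a further subsequence converging weakly to some point necessarily outside $V$; that point would be a weak cluster point of $(x^k)_{k\in\N}$ different from $x^*$, a contradiction. Hence $x^k\rightharpoonup x^*\in C$. I expect this last step to be the only mildly delicate point, since it again leans on weak sequential compactness of bounded sets rather than on any metrizability of the weak topology; the rest is elementary Hilbert-space computation.
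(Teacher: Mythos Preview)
Your argument is correct and is the standard proof of Opial's lemma: boundedness from (a), uniqueness of weak cluster points via the polarization identity and (b), and the subsequence-of-a-subsequence argument to pass from a unique cluster point to weak convergence. There is nothing to fix.

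Note, however, that the paper does not actually prove this lemma; it is simply quoted as a known result with a reference to \cite{bauschke-book}. So there is no ``paper's own proof'' to compare against --- your write-up supplies exactly the classical proof that the cited reference contains (see, e.g., Lemma~2.39 in \cite{bauschke-book}), and it would serve perfectly well as a self-contained justification if one wished to include it.
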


\section{An inertial hybrid proximal point algorithm}\label{sec2}

This section is dedicated to the formulation of an inertial hybrid proximal-extragradient algorithm and the convergence analysis of it. The iterative scheme we propose for 
finding the zeros of a given maximally monotone operator $T:{\cal H}\rightrightarrows{\cal H}$  has the following form. 

\begin{algorithm}\label{alg-inertial-hyb} Choose $x^0,x^1,x^2,y^0,y^1,v^1\in{\cal H}$, $\alpha,\sigma\geq 0$, $\ul c>0$, $(c_k)_{k\geq 1}$ and $(\alpha_k)_{k\geq 1}$ 
such that $$c_k\geq \ul c>0 \quad \forall k \geq 1,$$ 
$$0\leq\alpha_k\leq\alpha \quad \forall k\geq 1$$ and
\begin{equation}\label{param}\alpha(5 +4\sigma^2)+\sigma^2 < 1.\end{equation}
For every $k\geq 2$ consider the following iterative scheme: 
\begin{itemize} \item[(i)] for some 
$\varepsilon_k\geq 0$, choose $v^k\in T^{[\varepsilon_k]}(y^k)$ such that
\begin{align*}
& \hspace{-1cm} 2c_k\varepsilon_k+\|c_kv^k+y^k-x^k-\alpha_k(x^k-x^{k-1})\|^2 +\\
& \hspace{-1cm} 4 \alpha_k \|c_{k-1}v^{k-1}+y^{k-1}-x^{k-1}-\alpha_{k-1}(x^{k-1}-x^{k-2})\|^2 \\
& \hspace{7cm} \leq \sigma^2\|y^k-x^k\|^2+4 \alpha_k \sigma^2\|y^{k-1}-x^{k-1}\|^2;
\end{align*}
\item[(ii)] define $x^{k+1}=x^k+\alpha_k(x^k-x^{k-1})-c_kv^k$.\end{itemize}
\end{algorithm}

\subsection{Relation to other splitting algorithms from the literature}

Before analyzing the convergence of the above algorithm, we show that several algorithms from the literature can be embedded in the setting of this inertial hybrid scheme, 
by following some techniques from \cite{sv}.

(i) The {\it hybrid proximal-extragradient algorithm} (see \cite{sol-sv1999}) presented in Algorithm \ref{alg-hyb-classic} follows by taking $\alpha=0$, which enforces $\alpha_k=0$ for all $k\geq 1$. 

(ii) The {\it inertial proximal point algorithm} (see \cite{alvarez-attouch2001}) for finding the zeros of $T$ reads:
\begin{equation}\label{inertial-prox-alg}x^{k+1}=J_{c_kT}\big(x^k+\alpha_k(x^k-x^{k-1})\big) \ \forall k\geq 1,\end{equation}
where $0\leq\alpha_k\leq\alpha < \frac{1}{5}$ for every $k \geq 1$.

By taking in Algorithm \ref{alg-inertial-hyb} $\sigma=0$, we obtain for every $k\geq 2$ that $\varepsilon_k=0$ and $y^k=x^k+\alpha_k(x^k-x^{k-1})-c_kv^k=x^{k+1}$. 
Since $v^k\in T^{[0]}(y^k)$ we derive from Proposition \ref{prop-enl}(i) 
$$x^k-x^{k+1}+\alpha_k(x^k-x^{k-1})=c_kv^k\in c_kTy^k=c_kTx^{k+1},$$
which, by the definition of the resolvent, is nothing else than the iterative scheme \eqref{inertial-prox-alg} starting with $k=2$. 

(iii) The {\it inertial forward-backward algorithm} for finding the zeros of $T:=A+B$, where $A:{\cal H}\rightarrow{\cal H}$ is a $\gamma$-cocoercive operator
with $\gamma >0$ and $B:{\cal H}\rightrightarrows{\cal H}$ is a maximally monotone operator reads in the error-free case (see \cite{moudafi-oliny2003}): 
\begin{equation}\label{inertial-fb} x^{k+1}=J_{c_k B}\left(x^k-c_kAx^k+\alpha_k(x^k-x^{k-1})\right) \ \forall k\geq 1, 
\end{equation}
where for $\alpha, \sigma \geq 0$ fulfilling \eqref{param} it is assumed that $0\leq\alpha_k\leq\alpha$ and  $0<\ul c\leq c_k\leq 2\gamma \sigma^2$ for every $k\geq 1$. 

Considering $(x^k)_{k \in \N}$ the sequence generated by  \eqref{inertial-fb}, for every $k\geq 1$ we define: 
\begin{align*}
v^k & =\frac{1}{c_k}(x^k-x^{k+1})+\frac{\alpha_k}{c_k}(x^k-x^{k-1})\\
y^k & =x^{k+1}\\
\varepsilon_k^1 & =\frac{\|x^{k+1}-x^k\|^2}{4\gamma}\\ 
\varepsilon_k^2 & =\frac{\alpha_k}{\gamma}\|x^k-x^{k-1}\|^2\\
\varepsilon_k & =\varepsilon_k^1+\varepsilon_k^2.
\end{align*}

Let $k \geq 2$ be fixed. By the choice of $v^k$, the equality (ii) in Algorithm \ref{alg-inertial-hyb} is obviously verified. Moreover, from \eqref{inertial-fb} we derive that $v^k\in Ax^k+Bx^{k+1}$. 

From (i)-(ii) and (iv)-(v) in Proposition \ref{prop-enl} we get
\begin{align*}
v^k\in A^{[\varepsilon_k^1]}(x^{k+1})+Bx^{k+1}\subseteq &  A^{[\varepsilon_k^1]}(x^{k+1})+B^{[\varepsilon_k^2]}(x^{k+1})\subseteq\\ 
& (A+B)^{[\varepsilon_k]}(x^{k+1})=T^{[\varepsilon_k]}(x^{k+1})=T^{[\varepsilon_k]}(y^k).
\end{align*}

Finally, we show that the inequality in Algorithm \ref{alg-inertial-hyb}(i) holds. By the choices we met we have $c_lv^l+y^l-x^l-\alpha_l(x^l-x^{l-1})=0$ for all $l\geq 1$, hence 
\begin{align*}
& 2c_k\varepsilon_k+\|c_kv^k+y^k-x^k-\alpha_k(x^k-x^{k-1})\|^2+\\
&4\alpha_k\|c_{k-1}v^{k-1}+y^{k-1}-x^{k-1}-\alpha_{k-1}(x^{k-1}-x^{k-2})\|^2 = 2c_k\varepsilon_k\leq 4\gamma \sigma^2\varepsilon_k=\\
& \sigma^2\|x^{k+1}-x^k\|^2+4\alpha_k\sigma^2\|x^k-x^{k-1}\|^2 = \sigma^2\|y^k-x^k\|^2+4\alpha_k\sigma^2\|y^{k-1}-x^{k-1}\|^2.
\end{align*}

(iv) Finally, we consider the {\it inertial forward-backward-forward algorithm} (see \cite{b-c-inertial}) for finding the zeros of $T:=A+B$, where $A:{\cal H}\rightarrow{\cal H}$ is  a monotone and $\beta$-Lipschitz continuous operator 
with $\beta \geq 0$ and $B:{\cal H}\rightrightarrows{\cal H}$ is a maximally monotone operator. According to \cite{b-c-inertial} (see also \cite[Remark 6]{b-c-inertial}) this has the following iterative scheme: 
$$\left\{
\begin{array}{ll}
y^k=J_{c_k B}[x^k-c_kAx^k+\alpha_k(x^k-x^{k-1})]\\
x^{k+1}=y^k+c_k(Ax^k-Ay^k),
\end{array}\right. \quad \forall k\geq 1,$$
for $\alpha, \sigma \geq 0$ fulfilling \eqref{param} and $\ol \sigma >0$ chosen such that $\frac{1-5\alpha - \sigma^2(4\alpha + 1)}{2(\sigma^2 + 1)} \leq \ol \sigma < \frac{1-5\alpha}{2}$, it is assumed that $0\leq\alpha_k\leq\alpha$ and  
$0<\ul c\leq c_k\leq\frac{1}{\beta}\sqrt{\frac{1-5\alpha-2\ol \sigma}{4\alpha+2\ol \sigma+1}}$ for every $k\geq 1$. 

Consider the sequences $(x^k)_{k \in \N}$ and $(y^k)_{k \in \N}$ generated by this algorithm. For every $k\geq 1$ we define: 
\begin{align*}
\varepsilon_k & =0\\
b^k & =\frac{1}{c_k}(x^k-y^k)-Ax^k+\frac{\alpha_k}{c_k}(x^k-x^{k-1}),\\
v^k & =Ay^k+b^k\\
r^k & =c_kv^k+y^k-x^k-\alpha_k(x^k-x^{k-1}). 
\end{align*}
Let $k \geq 2$ be fixed. The definition of the resolvent yields $b^k\in By^k$, hence $v^k\in Ty^k$. Further, from the 
definition of $b^k$ we get 
\begin{equation*} r^k=c_kAy^k+c_kb^k+y^k-x^k-\alpha_k(x^k-x^{k-1})=c_k(Ay^k-Ax^k),\end{equation*}
hence $$x^{k+1}=y^k+c_k(Ax^k-Ay^k)=y^k-r^k=x^k+\alpha_k(x^k-x^{k-1})-c_kv^k$$
and the update rule in Algorithm \ref{alg-inertial-hyb}(ii) is verified. In what concerns the inequality in Algorithm \ref{alg-inertial-hyb}(i), we notice first that
$\sqrt{\frac{1-5\alpha-2\ol \sigma}{4\alpha+2\ol \sigma+1}}\leq \sigma<1$. By using  the fact that $A$ is $\beta$-Lipschitz, we get
\begin{align*}
& 2c_k\varepsilon_k+\|c_kv^k+y^k-x^k-\alpha_k(x^k-x^{k-1})\|^2+\\
& 4\alpha_k\|c_{k-1}v^{k-1}+y^{k-1}-x^{k-1}-\alpha_{k-1}(x^{k-1}-x^{k-2})\|^2 = \|r^k\|^2+4\alpha_k\|r^{k-1}\|^2=\\
& \|c_k(Ay^k-Ax^k)\|^2+4\alpha_k\|c_{k-1}(Ay^{k-1}-Ax^{k-1})\|^2 \leq\\
& c_k^2\beta^2\|y^k-x^k\|^2+4\alpha_k c_{k-1}^2\beta^2\|y^{k-1}-x^{k-1}\|^2 \leq \\
& \sigma^2 \|y^k-x^k\|^2+4\alpha_k \sigma^2\|y^{k-1}-x^{k-1}\|^2.
\end{align*}

\subsection{Convergence analysis}

In this subsection we prove the convergence of the proposed inertial hybrid proximal-extragradient algorithm.

\begin{theorem}\label{th-inertial-hybrid} Let $T:{\cal H}\rightrightarrows {\cal H}$ be a maximally monotone operator such that $\zer T\neq\emptyset$. Consider the 
sequences generated by Algorithm \ref{alg-inertial-hyb}, where $(\alpha_k)_{k\geq 1}$ is supposed to be nondecreasing and we either take $\alpha_1=0$ or $x^1=x^0$. Then the following statements are true: 

(i) $\sum_{k\in\N}\|x^{k+1}-x^k\|^2<+\infty$, $\sum_{k\in\N}\|x^k-y^k\|^2<+\infty$, $\sum_{k\geq 1}\|v^k\|^2<+\infty$ and $\sum_{k\geq 2}\varepsilon_k<+\infty$;

(ii) $(x^k)_{k\in\N}$ converges weakly to an element in $\zer T$.  
\end{theorem}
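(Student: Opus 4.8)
The plan is to follow the classical Solodov--Svaiter argument, but with the Fej\'{e}r-type estimate replaced by the inertial analogue that feeds Lemma \ref{ext-fejer1}. Fix $z\in\zer T$, so that $(z,0)\in\gr T$. The starting point is the estimate one gets by expanding $\|x^{k+1}-z\|^2$ using the update rule $x^{k+1}=x^k+\alpha_k(x^k-x^{k-1})-c_kv^k$. Writing $w^k:=x^k+\alpha_k(x^k-x^{k-1})$ and $r^k:=c_kv^k+y^k-x^k-\alpha_k(x^k-x^{k-1})$, I would first record the identity $c_kv^k=w^k-x^{k+1}$ and $y^k=w^k-r^k+\dots$ to re-express everything in terms of $y^k$, $x^k$, $x^{k-1}$ and $r^k$. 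The key inequality is $\langle y^k-z,v^k\rangle\geq-\varepsilon_k$, which holds because $v^k\in T^{[\varepsilon_k]}(y^k)$ and $0\in Tz$; this is the monotone-enlargement replacement for the exact inclusion used in the plain proximal point method. Combining this with the expansion and discarding the unavoidable cross-terms yields, after the standard manipulations, a bound of the shape
\begin{equation*}
\|x^{k+1}-z\|^2\leq \|x^k-z\|^2+\alpha_k\big(\|x^k-z\|^2-\|x^{k-1}-z\|^2\big)+\alpha_k(1+\alpha_k)\|x^k-x^{k-1}\|^2-\mu\|x^k-y^k\|^2-\nu\|x^{k+1}-x^k\|^2+(\text{terms in }\|r^k\|^2,\|r^{k-1}\|^2,\varepsilon_k),
\end{equation*}
where $\mu,\nu>0$ and the $r$-terms and $2c_k\varepsilon_k$ are exactly what the inequality in Algorithm \ref{alg-inertial-hyb}(i) controls: that inequality lets me absorb $\|r^k\|^2+4\alpha_k\|r^{k-1}\|^2+2c_k\varepsilon_k$ against $\sigma^2\|y^k-x^k\|^2+4\alpha_k\sigma^2\|y^{k-1}-x^{k-1}\|^2$, so that the $\|x^k-y^k\|^2$ coefficient survives with a positive sign precisely because of the parameter restriction \eqref{param}, $\alpha(5+4\sigma^2)+\sigma^2<1$. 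The factor $4$ in front of the $r^{k-1}$ and $y^{k-1}-x^{k-1}$ terms in (i) is there exactly so the $\|y^{k-1}-x^{k-1}\|^2$ contribution at step $k$ telescopes against the one produced at step $k-1$.

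The next step is to convert this into the hypotheses of Lemma \ref{ext-fejer1}. Set $\varphi_k:=\|x^k-z\|^2$. The displayed inequality is not yet of the form $\varphi_{k+1}\leq\varphi_k+\alpha_k(\varphi_k-\varphi_{k-1})+\delta_k$ because of the leftover $\|x^k-x^{k-1}\|^2$ term and the negative $\|x^{k+1}-x^k\|^2$ term. The device here is the auxiliary sequence trick of Alvarez--Attouch: define $\psi_k:=\varphi_k-\alpha_k\varphi_{k-1}+(\text{nonneg.\ multiple of})\|x^k-x^{k-1}\|^2+(\text{the telescoping }y\text{-}x\text{ term})$, or equivalently run the estimate a second time to produce an inequality in which the negative $\|x^{k+1}-x^k\|^2$ pays for the positive $\|x^k-x^{k-1}\|^2$ of the following step. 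Monotonicity of $(\alpha_k)$ and the normalization $\alpha_1=0$ or $x^1=x^0$ are used exactly at the seam between consecutive iterations, to guarantee $\alpha_{k+1}\varphi_k\geq \alpha_k\varphi_k$ going the right way and to make the $k=1$ base case harmless. After this bookkeeping one arrives at $\varphi_{k+1}\leq\varphi_k+\alpha_k(\varphi_k-\varphi_{k-1})+\delta_k$ with $\delta_k\geq0$ and, crucially, $\sum_k\delta_k<+\infty$ obtained by summing the nonpositive square terms (a second pass of the same computation, now keeping the negative terms, shows $\sum_k(\|x^{k+1}-x^k\|^2+\|x^k-y^k\|^2)<\infty$ and hence $\sum_k\delta_k<\infty$). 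Lemma \ref{ext-fejer1} then gives $\lim_k\varphi_k=\varphi^*\in[0,\infty)$, i.e.\ $\lim_k\|x^k-z\|$ exists for every $z\in\zer T$, and $\sum_k[\varphi_k-\varphi_{k-1}]_+<\infty$. This last fact, together with $0\leq\alpha_k\leq\alpha<1$, yields $\sum_k\|x^{k+1}-x^k\|^2<\infty$; then $\|x^k-y^k\|^2$ summability follows from the surviving $-\mu\|x^k-y^k\|^2$ term, $\|v^k\|^2$ summability from $c_kv^k=x^k-x^{k+1}+\alpha_k(x^k-x^{k-1})$ and $c_k\geq\ul c$, and $\sum\varepsilon_k<\infty$ from $2c_k\varepsilon_k\leq\sigma^2\|y^k-x^k\|^2+4\alpha_k\sigma^2\|y^{k-1}-x^{k-1}\|^2$ and again $c_k\geq\ul c$. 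This proves (i).

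For (ii) I would invoke the Opial Lemma \ref{opial} with $C=\zer T$. Condition (a) is the just-established existence of $\lim_k\|x^k-z\|$ for all $z\in\zer T$. For condition (b), let $\bar x$ be a weak cluster point, $x^{k_n}\rightharpoonup\bar x$. From $\sum\|x^k-y^k\|^2<\infty$ we get $x^k-y^k\to0$, so $y^{k_n}\rightharpoonup\bar x$ as well; from $\sum\|v^k\|^2<\infty$ we get $v^{k_n}\to0$; and $\varepsilon_{k_n}\to0$. Since $v^{k_n}\in T^{[\varepsilon_{k_n}]}(y^{k_n})$, Proposition \ref{prop-enl}(iii) gives $0\in T^{[0]}(\bar x)=T\bar x$ by Proposition \ref{prop-enl}(i), i.e.\ $\bar x\in\zer T$. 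Opial's Lemma then yields weak convergence of $(x^k)_{k\in\N}$ to a point in $\zer T$.

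The main obstacle is the first paragraph: organizing the expansion of $\|x^{k+1}-z\|^2$ so that, after the enlargement inequality $\langle y^k-z,v^k\rangle\geq-\varepsilon_k$ is used and the Algorithm \ref{alg-inertial-hyb}(i) bound is substituted, the coefficient of $\|x^k-y^k\|^2$ is genuinely negative under \eqref{param} while simultaneously producing a negative $\|x^{k+1}-x^k\|^2$ big enough to telescope against the inertial term $\|x^k-x^{k-1}\|^2$ at the next step. Getting the constants to line up is exactly where the somewhat mysterious condition $\alpha(5+4\sigma^2)+\sigma^2<1$ and the factor $4$ in the algorithm's inequality come from, and it is the one place where the computation cannot be shortcut.
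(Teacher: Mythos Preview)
Your high-level plan (enlargement inequality $\langle v^k,y^k-z\rangle\geq-\varepsilon_k$, Alvarez--Attouch auxiliary sequence, Lemma~\ref{ext-fejer1}, then Opial via Proposition~\ref{prop-enl}(iii)) is exactly the paper's architecture, and part (ii) is handled identically. The difference, and the place where your outline would not close, is the organization of the core estimate.

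You anticipate a term $-\nu\|x^{k+1}-x^k\|^2$ that telescopes against $\alpha_k(1+\alpha_k)\|x^k-x^{k-1}\|^2$ at the next step. In the actual expansion this does not occur: writing $r^k=y^k-x^{k+1}$ and computing both sides of $\langle x^{k+1}-x^k-\alpha_k(x^k-x^{k-1}),x^{k+1}-z\rangle\leq c_k\varepsilon_k+\langle c_kv^k,r^k\rangle$, the $\tfrac12\|x^{k+1}-x^k\|^2$ contributions cancel and one is left with
\[
\varphi_{k+1}-\varphi_k-\alpha_k(\varphi_k-\varphi_{k-1})\leq \alpha_k\langle x^k-x^{k-1},y^k-x^k\rangle+\tfrac{\alpha_k}{2}\|x^k-x^{k-1}\|^2+c_k\varepsilon_k+\tfrac12\|r^k\|^2-\tfrac12\|x^k-y^k\|^2,
\]
where $\varphi_k=\tfrac12\|x^k-z\|^2$. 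There is no negative $\|x^{k+1}-x^k\|^2$ available. The paper's key move is instead the identity $x^k-x^{k-1}=(y^{k-1}-x^{k-1})-r^{k-1}$ (from $r^{k-1}=y^{k-1}-x^k$), which converts the inertial term into $2\|r^{k-1}\|^2+2\|y^{k-1}-x^{k-1}\|^2$. After Cauchy--Schwarz on the cross term this yields
\[
\varphi_{k+1}-\varphi_k-\alpha_k(\varphi_k-\varphi_{k-1})\leq \tfrac{\alpha_k-1}{2}\|x^k-y^k\|^2+2\alpha_k\|x^{k-1}-y^{k-1}\|^2+c_k\varepsilon_k+\tfrac12\|r^k\|^2+2\alpha_k\|r^{k-1}\|^2,
\]
and \emph{now} the inequality in Algorithm~\ref{alg-inertial-hyb}(i) matches termwise: the factor $4\alpha_k$ there is precisely $2\cdot 2\alpha_k$, so after substitution the right-hand side becomes $-\tfrac{1-\alpha_k-\sigma^2}{2}\|x^k-y^k\|^2+2\alpha_k(1+\sigma^2)\|x^{k-1}-y^{k-1}\|^2$. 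The auxiliary sequence is then $\mu_k=\varphi_k-\alpha_k\varphi_{k-1}+2\alpha_k(1+\sigma^2)\|x^{k-1}-y^{k-1}\|^2$ (built from $\|x-y\|^2$, not $\|x^{k}-x^{k-1}\|^2$), and \eqref{param} is exactly the condition that makes $\mu_{k+1}-\mu_k\leq -c\|x^k-y^k\|^2$ with $c>0$. From this one gets $\sum\|x^k-y^k\|^2<\infty$ \emph{first}; the algorithm inequality then gives $\sum\varepsilon_k<\infty$ and $\sum\|r^k\|^2<\infty$; $\sum\|x^{k+1}-x^k\|^2<\infty$ follows from $x^{k+1}-x^k=(y^k-x^k)-r^k$; and $\sum\|v^k\|^2<\infty$ from $c_kv^k=x^k-x^{k+1}+\alpha_k(x^k-x^{k-1})$. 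Your proposed order (first $\sum\|x^{k+1}-x^k\|^2$ via $\sum[\varphi_k-\varphi_{k-1}]_+<\infty$, then $\sum\|x^k-y^k\|^2$) does not work as stated: summability of $[\varphi_k-\varphi_{k-1}]_+$ by itself does not control $\|x^{k+1}-x^k\|^2$. The monotonicity of $(\alpha_k)$ and the hypothesis $\alpha_1=0$ or $x^1=x^0$ are used exactly where you say, namely to ensure $\mu_{k+1}-\mu_k\leq\varphi_{k+1}-(1+\alpha_k)\varphi_k+\alpha_k\varphi_{k-1}+\ldots$ and $\mu_1\geq0$.
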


\begin{proof} We fix an element $z\in \zer T$ and $k\geq 1$ and  make the following notations 
\begin{equation*}r^k=c_kv^k+y^k-x^k-\alpha_k(x^k-x^{k-1}) \ \mbox{and} \ \varphi_k=\frac{1}{2}\|x^k-z\|^2.
\end{equation*}
By the update rule in Algorithm \ref{alg-inertial-hyb}(ii) one obviously has
\begin{equation}\label{r2}r^k=y^k-x^{k+1}.\end{equation}
Since $v^k\in T^{[\varepsilon_k]}(y^k)$ and $0\in Tz$, the definition of the enlargement yields the inequality 
$$\langle v^k,y^k-z\rangle\geq -\varepsilon_k.$$

Multiplying it with $-c_k$ and taking into account Algorithm \ref{alg-inertial-hyb}(ii) and \eqref{r2} we derive 
\begin{equation}\label{ineq1}\langle x^{k+1}-x^k-\alpha_k(x^k-x^{k-1}),x^{k+1}-z\rangle\leq c_k\varepsilon_k+\langle c_kv^k,r^k\rangle.\end{equation}

Let us take now a look at the left-hand side of the above inequality. We have 
$$\langle x^{k+1}-x^k-\alpha_k(x^k-x^{k-1}),x^{k+1}-z\rangle
$$$$=
\langle x^{k+1}-x^k,x^{k+1}-z\rangle-\alpha_k(\langle x^k-x^{k-1},x^k-z\rangle+\langle x^k-x^{k-1},x^{k+1}-x^k\rangle)$$
$$=\frac{1}{2}\|x^{k+1}-x^k\|^2+\varphi_{k+1}-\varphi_k-\alpha_k\left(\frac{1}{2}\|x^k-x^{k-1}\|^2+\varphi_k-\varphi_{k-1}+\langle x^k-x^{k-1},x^{k+1}-x^k\rangle\right)$$
$$=\varphi_{k+1}-\varphi_k-\alpha_k(\varphi_k-\varphi_{k-1})+\frac{1}{2}\|x^{k+1}-x^k\|^2-\frac{\alpha_k}{2}\|x^k-x^{k-1}\|^2-\alpha_k\langle x^k-x^{k-1},x^{k+1}-x^k\rangle.$$
The term $\langle c_kv^k,r^k\rangle$ in the right-hand side of \eqref{ineq1} can be written as
\begin{align*}
\langle c_kv^k,r^k\rangle & =\langle x^k+\alpha_k(x^k-x^{k-1})-x^{k+1},y^k-x^{k+1}\rangle\\
& =\langle x^k-x^{k+1},y^k-x^{k+1}\rangle+\alpha_k\langle x^k-x^{k-1},y^k-x^{k+1}\rangle\\
& =\frac{1}{2}\|x^{k+1}-x^k\|^2+\frac{1}{2}\|r^k\|^2-\frac{1}{2}\|x^k-y^k\|^2+\alpha_k\langle x^k-x^{k-1},y^k-x^{k+1}\rangle.
\end{align*}

Consequently, \eqref{ineq1} can be equivalently written as
\begin{align}\label{ineq2} \varphi_{k+1}-\varphi_k-\alpha_k(\varphi_k-\varphi_{k-1})\leq \ &\alpha_k\langle x^k-x^{k-1},y^k-x^k\rangle+\frac{\alpha_k}{2}\|x^k-x^{k-1}\|^2\nonumber\\
&+c_k\varepsilon_k+\frac{1}{2}\|r^k\|^2-\frac{1}{2}\|x^k-y^k\|^2.
\end{align}
Further, 
\begin{align*}
\alpha_k\langle x^k-x^{k-1},y^k-x^k\rangle  + & \frac{\alpha_k}{2}\|x^k-x^{k-1}\|^2 \leq \alpha_k\|x^k-x^{k-1}\|^2+\frac{\alpha_k}{2}\|y^k-x^k\|^2\\
& \leq 2 \alpha_k (\|r^{k-1}\|^2+\|x^{k-1}-y^{k-1}\|^2)+\frac{\alpha_k}{2}\|y^k-x^k\|^2
\end{align*}
and from \eqref{ineq2} we obtain 
\begin{align}\label{ineq3} \varphi_{k+1}-\varphi_k-\alpha_k(\varphi_k-\varphi_{k-1})\leq \ & \frac{\alpha_k-1}{2}\|x^k-y^k\|^2+
2\alpha_k\|x^{k-1}-y^{k-1}\|^2\nonumber\\
 &+c_k\varepsilon_k+\frac{1}{2}\|r^k\|^2+2\alpha_k\|r^{k-1}\|^2.
\end{align}

On the other hand, the inequality in Algorithm \ref{alg-inertial-hyb}(ii) yields 
\begin{equation}\label{ineq-alg}c_k\varepsilon_k+\frac{1}{2}\|r^k\|^2+2\alpha_k \|r^{k-1}\|^2\leq \frac{\sigma^2}{2}\|y^k-x^k\|^2+2\alpha_k \sigma^2\|y^{k-1}-x^{k-1}\|^2,\end{equation}
hence from \eqref{ineq3} we get 
\begin{equation}\label{ineq4} \varphi_{k+1}-\varphi_k-\alpha_k(\varphi_k-\varphi_{k-1})\leq
-\frac{1 - \alpha_k - \sigma^2}{2} \|x^k-y^k\|^2+2\alpha_k(1+\sigma^2)\|x^{k-1}-y^{k-1}\|^2.
\end{equation}

(i) For the proof of this statement we are going to use some techniques from \cite{alvarez-attouch2001}. We define the sequence 
$$\mu_k:=\varphi_k-\alpha_k\varphi_{k-1}+2\alpha_k(1+\sigma^2)\|x^{k-1}-y^{k-1}\|^2 \ \forall k \geq 1.$$
Using the monotonicity of $(\alpha_k)_{k \geq 1}$ and the fact that $\varphi_k\geq 0$ for every $k\geq 1$, we get
\begin{align*}
 \mu_{k+1}-\mu_k\leq \ &\varphi_{k+1}-\varphi_k-\alpha_k(\varphi_k-\varphi_{k-1})\\
& +2\alpha_{k+1}(1+\sigma^2)\|x^k-y^k\|^2-2\alpha_k(1+\sigma^2)\|x^{k-1}-y^{k-1}\|^2,
\end{align*}
which gives by \eqref{ineq4}
\begin{equation}\label{ineq5}
\mu_{k+1}-\mu_k\leq-\left(\frac{1 - \alpha_k - \sigma^2}{2}-2\alpha_{k+1}(1+\sigma^2)\right)\|x^k-y^k\|^2 \ \forall k \geq 1.
\end{equation}

The upper bound requested for $(\alpha_k)_{k\geq 1}$ and \eqref{param} shows the inequality 
\begin{equation}\label{claim}\frac{1 - \alpha_k - \sigma^2}{2}-2\alpha_{k+1}(1+\sigma^2) \geq \frac{1-\alpha(5 + 4 \sigma^2) - \sigma^2}{2} > 0 \ \forall k \geq 1,\end{equation}
thus 
\begin{equation}\label{ineq6}\mu_{k+1}-\mu_k\leq-\frac{1-\alpha(5 + 4 \sigma^2) - \sigma^2}{2}\|x^k-y^k\|^2 \ \forall k\geq 1.\end{equation}

The sequence $(\mu_k)_{k \geq 1}$ is nonincreasing and the bound for $(\alpha_k)_{k \geq 1}$ delivers 
\begin{equation}\label{ineq7}
-\alpha\varphi_{k-1}\leq\varphi_k-\alpha\varphi_{k-1}\leq \mu_k\leq\mu_1 \ \forall k \geq 1.
\end{equation}

We obtain
$$\varphi_k\leq \alpha^k\varphi_0+\mu_1\sum_{i=0}^{k-1}\alpha^i\leq \alpha^k\varphi_0+\frac{\mu_1}{1-\alpha} \ \forall k \geq 1,$$
where we notice that $\mu_1\geq 0$. Indeed, in case $\alpha_1=0$ one has $\mu_1=\varphi_1\geq 0$, while in 
the case $x^1=x^0$ we have $\varphi_1=\varphi_0$ and 
$\mu_1=\varphi_1-\alpha_1\varphi_0+2\alpha_1(1+\sigma^2)\|x^0-y^0\|^2\geq (1-\alpha)\varphi_0+2\alpha_1(1+\sigma^2)\|x^0-y^0\|^2\geq 0$ due to \eqref{param}. 
Combining \eqref{ineq6} and \eqref{ineq7} we get for every $n \geq 1$
$$\frac{1-\alpha(5 + 4 \sigma^2) - \sigma^2}{2} \sum_{k=1}^{n}\|x^k-y^k\|^2\leq \mu_1-\mu_{n+1}\leq \mu_1+\alpha\varphi_n\leq \alpha^{n+1}\varphi_0+\frac{\mu_1}{1-\alpha},$$ which
shows that $\sum_{k\in\N}\|x^k-y^k\|^2<+\infty$.

The fact that $\sum_{k\geq 2}\varepsilon_k<+\infty$ follows now from \eqref{ineq-alg}, since $c_k\geq \ul c$ and $\alpha_k\leq\alpha$ for every $k\geq 1$. 
Notice that from \eqref{ineq-alg} we deduce also that $\sum_{k\geq 1}\|r^k\|^2<+\infty$. Further, from \eqref{r2} we have 
$\sum_{k=1}^n\|x^{k+1}-x^k\|^2\leq 2\left(\sum_{k=1}^n\|r^k\|^2+\sum_{k=1}^n\|y^k-x^k\|^2\right)$ for every $n\geq 1$, hence $\sum_{k\in\N}\|x^{k+1}-x^k\|^2<+\infty$.
Finally, from Algorithm \ref{alg-inertial-hyb}(ii) we derive that $\sum_{k\geq 1}\|v^k\|^2<+\infty$.

(ii) In order to prove this statement we are going to use Lemma \ref{opial}. We shown above that for an arbitrary $z\in\zer T$ the inequality \eqref{ineq4} is true. By \eqref{claim} we get 
\begin{equation}\label{ineq-} \varphi_{k+1}-\varphi_k-\alpha_k(\varphi_k-\varphi_{k-1})\leq 2\alpha_k(1+\sigma^2)\|x^{k-1}-y^{k-1}\|^2 \ \forall k\geq 1
\end{equation} and from Lemma \ref{ext-fejer1} and  
part (i) it follows that $\lim_{k\rightarrow+\infty}\|x^k-z\|$ exists. On the other hand, let $x$ be a sequential weak cluster point of
$(x^k)_{k\in\N}$, that is, this sequence has a subsequence $(x^{k_n})_{n \in \N}$ fulfilling $x^{k_n}\rightharpoonup x$ as $n\rightarrow+\infty$. Since $x^k-y^k\rightarrow 0$ 
as $k\rightarrow+\infty$, 
we get $y^{k_n}\rightharpoonup x$ as $n\rightarrow+\infty$. Further, we have $v^{k_n}\in T^{[\varepsilon_{k_n}]}(y^{k_n})$, 
$v^{k_n}\rightarrow 0$ and $\varepsilon^{k_n}\rightarrow 0$ as $n\rightarrow+\infty$, hence from Proposition \ref{prop-enl}(iii) and (i) we deduce $0\in Tx$, 
thus $x\in\zer T$. By Lemma \ref{opial}, $(x^k)_{k\in\N}$ converges weakly to an element in $\zer T$.
\end{proof}

\begin{remark} By arguing in a similar manner as for Algorithm \ref{alg-inertial-hyb}, one can prove the convergence of the following inertial-type hybrid proximal-extragradient
algorithm, as well:

\begin{algorithm}\label{alg-inertial-hyb-var} Choose $x^0,x^1,x^2,y^0,y^1,v^1\in{\cal H}$, $\alpha,\sigma\geq 0$, $\ul c>0$, $(c_k)_{k\geq 1}$ and $(\alpha_k)_{k\geq 1}$ 
such that $$c_k\geq \ul c>0 \quad \forall k \geq 1,$$ 
$$0\leq\alpha_k\leq\alpha \quad \forall k\geq 1$$ and
\begin{equation}\label{param-var} 5\alpha + \sigma^2 < 1.\end{equation}
For every $k\geq 2$ consider the following iterative scheme: 
\begin{itemize} \item[(i)] for some 
$\varepsilon_k\geq 0$, choose $v^k\in T^{[\varepsilon_k]}(y^k)$ such that
\begin{align*}
& 2c_k\varepsilon_k+\|c_kv^k+y^k-x^k-\alpha_k(x^k-x^{k-1})\|^2 +\\
& 4 \alpha_k \|c_{k-1}v^{k-1}+y^{k-1}-x^{k-1}-\alpha_{k-1}(x^{k-1}-x^{k-2})\|^2 \leq \sigma^2\|y^k-x^k\|^2;
\end{align*}
\item[(ii)] define $x^{k+1}=x^k+\alpha_k(x^k-x^{k-1})-c_kv^k$.\end{itemize}
\end{algorithm}
The differences between the two iterative scheme are in the relations \eqref{param} and \eqref{param-var} and the two inequalities in the statements (i), respectively. 
The \textit{hybrid proximal-extragradient}, the \textit{inertial proximal point} and the \textit{inertial forward-backward} algorithms can be rediscovered as particular instances
of this iterative scheme, too (notice that for the latter on needs to take $\varepsilon_k^2=0, k \geq 1$). However, the \textit{inertial forward-backward-forward algorithm} 
cannot be embedded in Algorithm \ref{alg-inertial-hyb-var} and this is why we opted in this paper for the inertial version provided in Algorithm \ref{alg-inertial-hyb}, despite its more complex formulation.
\end{remark}

\end{document}